\documentclass[fontsize=10pt]{scrreprt}

\usepackage[square,numbers,sort,nonamebreak]{natbib}
\usepackage{setspace}
\usepackage[english]{babel}
\usepackage{chngcntr}
\counterwithout{equation}{chapter}
\counterwithout{table}{chapter}
\usepackage[hang]{footmisc}
 at 11truept
\font\ssc=pplrc9d at 11 truept
\newcommand\qedbox{$\rlap{$\sqcap$}\sqcup$}
\usepackage[hmarginratio=1:1,bottom=2cm,top=2.5cm]{geometry}
\usepackage[automark]{scrlayer-scrpage}
\cohead{\textnormal{A counterexample to Byott's conjecture}}
\lehead{\pagemark}
\rohead{\pagemark}
\let\ceheadL\cehead
\renewcommand\cehead[1]{\ceheadL{\textnormal{#1}}}
\cehead{Massimiliano Di Matteo --- Maria Ferrara --- Marco Trombetti}
\lefoot{}
\rofoot{}

\usepackage{graphicx}
\usepackage[usenames,dvipsnames,svgnames,table]{xcolor}
\definecolor{Maroon}{cmyk}{0,0.87,0.68,0.32}
\definecolor{RoyalBlue2}{cmyk}{80,100,0,0.1}
\newcommand\auths[1]{\large\textsc{\textcolor{Maroon}{#1}}\setstretch{1.2}}
\newcommand\titl[1]{\center\linespread{1.1}\color{RoyalBlue2}\Large\textbf{#1}\color{black}\bigskip}
\renewcommand\abstract[1]{
\begin{center}{\textbf{Abstract}}\end{center}
{\linespread{1.1}\fontsize{9pt}{10pt}\selectfont #1}}

\usepackage[utf8]{inputenc}
\usepackage[sc,osf]{mathpazo}
\usepackage[euler-digits]{eulervm}
\usepackage[T1]{fontenc}
\usepackage{mathtools}
\DeclareSymbolFont{operators}{\encodingdefault}{ppl}{m}{n}
\DeclareMathAlphabet{\mathbf}{\encodingdefault}{ppl}{bx}{n}
\DeclareMathAlphabet{\mathit}{\encodingdefault}{ppl}{m}{it}
\usepackage{amsfonts,amsmath,amssymb,ragged2e,titlesec,booktabs,array,tabularx}
\usepackage[hidelinks,pdftitle={A Counterexample to Byott's Conjecture for Finite Skew Braces},pdfauthor={Massimiliano Di Matteo, Maria Ferrara, Marco Trombetti}]{hyperref}

\renewcommand{\thesection}{\arabic{section}}

\titleformat{\section}{\medskip\bigskip\normalfont\Large\bf}{\thesection}{0.5em}{}
\titleformat{\subsection}{\smallskip\bigskip\normalfont\large\bf}{\thesubsection}{0.5em}{}

\usepackage{amsthm}
\newtheoremstyle{dotless}{}{}{\itshape}{}{\bfseries}{}{1em}{}
\theoremstyle{dotless}
\newtheorem{theo}{Theorem}
\newtheorem{prop}[theo]{Proposition}
\newtheorem{lem}[theo]{Lemma}

\renewenvironment{proof}{\par\smallskip\noindent{\sc Proof \;---\;}}{\hfill\qedbox\par\medskip}
\numberwithin{theo}{section}

\DeclareOldFontCommand{\rm}{\normalfont\rmfamily}{\mathrm}
\DeclareOldFontCommand{\sf}{\normalfont\sffamily}{\mathsf}
\DeclareOldFontCommand{\tt}{\normalfont\ttfamily}{\mathtt}
\DeclareOldFontCommand{\bf}{\normalfont\bfseries}{\mathbf}
\DeclareOldFontCommand{\it}{\normalfont\itshape}{\mathit}
\DeclareOldFontCommand{\sl}{\normalfont\slshape}{\@nomath\sl}
\DeclareOldFontCommand{\sc}{\normalfont\scshape}{\@nomath\sc}

\DeclareMathOperator{\GL}{GL}
\DeclareMathOperator{\PSL}{PSL}
\DeclareMathOperator{\Inn}{Inn}
\DeclareMathOperator{\Aut}{Aut}
\DeclareMathOperator{\Hol}{Hol}

\newcommand{\F}{\mathbf F}
\newcommand{\op}{\mathrm{op}}
\newcommand{\st}{\mathbin{\star}}

\begin{document}
\setheadsepline{1pt}[\color{black}]

\titl{A Counterexample to Byott's Conjecture for Finite Skew Braces}

\auths{Massimiliano Di Matteo --- Maria Ferrara --- Marco Trombetti}

\thispagestyle{empty}
\justify
\setstretch{1.05}
\abstract{We construct a finite skew brace with soluble additive group and insoluble multiplicative group.  The multiplicative group has a quotient isomorphic to $\PSL_2(7)$, and hence the example disproves Byott's conjecture.}

\medskip
\noindent{\small\textit{Mathematics Subject Classification (2020).} 16T25, 20D10, 12F10.\par}
\smallskip
\noindent{\small\textit{Keywords.} Skew brace, regular subgroup, holomorph, Hopf--Galois structure, bijective cocycle, soluble group.\par}
\bigskip

\setstretch{1.1}
\fontsize{11pt}{12.5pt}\selectfont
\section{Introduction}

Let $N$ be a finite soluble group.  Byott's conjecture asserts that every
regular subgroup of $\Hol(N)=N\rtimes\Aut(N)$ is soluble.  The question arose
from Byott's work on solubility criteria for Hopf--Galois structures
\cite{Byott2015} and was subsequently isolated and substantially reduced in
purely group-theoretic form in \cite{Byott2024}.  In the language of skew
braces, the problem was recorded in \cite[Question~1.25]{SmoktunowiczVendramin2018}
and \cite[Problem~2.46]{Vendramin2019}; see also the group-theoretic survey
\cite{BallesterEtAl2023}.  It continued to be stated as open in later work,
for example \cite{DameleErcan2026,DameleTwoSided2026,
GorshkovNasybullov2021,LiNasybullovZadvornov2025,
StefanelloTrappeniers2023b}.

The conjecture has equivalent formulations in several areas.  A subgroup of
$\Hol(N)$ is regular when its natural action on $N$ is free and transitive.
If $G$ and $N$ have the same finite order, a regular embedding of $G$ into
$\Hol(N)$ is equivalently given by a homomorphism
$\lambda:G\to\Aut(N)$ and a bijection $b:G\to N$ satisfying
$$
 b(gh)=b(g)\lambda_g(b(h)).
$$
Thus the problem asks whether an insoluble group can occur as the source of a
bijective non-abelian $1$-cocycle with soluble target.

This formulation is directly connected with Hopf--Galois theory.  By the
Greither--Pareigis correspondence, Hopf--Galois structures on a finite
Galois extension are described by regular permutation groups normalized by
the left regular representation of the Galois group
\cite{GreitherPareigis1987}.  In the holomorph formulation, a Galois group
$G$ admits a Hopf--Galois structure of type $N$ precisely when $\Hol(N)$
contains a regular subgroup isomorphic to $G$.  Consequently, Byott's
conjecture predicts that an insoluble finite Galois group cannot admit a
Hopf--Galois structure of soluble type.

The same data are equivalent to a skew brace.  A \emph{skew brace} is a set $B$ with
two group operations, denoted by $+$ and $\circ$, such that
\begin{equation}\label{eq:brace-identity}
 a\circ(b+c)=(a\circ b)-a+(a\circ c)
\end{equation}
for all $a,b,c\in B$.  The group $(B,+)$ need not be abelian; throughout,
$-a$ denotes the inverse of $a$ in $(B,+)$.  If $x,y\in B$, we write
$\Inn_x^B(y)=x+y-x$ for conjugation in the additive group.  These conventions
fix the order of every term in formulas involving $+$.
Skew braces provide the algebraic framework for non-degenerate
set-theoretic solutions of the Yang--Baxter equation
\cite{GuarnieriVendramin2017}.  Their connections with regular subgroups,
bijective cocycles, factorizations and Hopf--Galois structures are developed
in \cite{SmoktunowiczVendramin2018,StefanelloTrappeniers2023}; a systematic
account is given in the monograph of Ced\'o and Vendramin
\cite{CedoVendramin2026}.

A substantial collection of affirmative special cases made the conjecture
plausible.  It holds when the additive group is nilpotent
\cite{CedoSmoktunowiczVendramin2019}; for cube-free orders, for several
additional families of orders and for all orders at most $2000$
\cite{TsangQin2020}; and when the order is not divisible by $3$
\cite{GorshkovNasybullov2021}.  It holds for finite two-sided skew braces
\cite{Nasybullov2019}; subsequent structural results for two-sided skew braces
appear in \cite{Trappeniers2023,DameleErcan2026}, and every finite quotient of
the multiplicative group of an arbitrary two-sided skew brace of soluble type
is soluble \cite{DameleTwoSided2026}.  The conjecture also holds for bi-skew braces
\cite{StefanelloTrappeniers2023b}, when the derived subgroup of the additive
group is cyclic \cite{LiNasybullovZadvornov2025}, when the additive group is
a $Z$-group \cite{DameleZGroup2026}, when it is a soluble CCS group
\cite{DameleMastrogiacomo2026}, and when its Sylow $2$-subgroup is cyclic
\cite{DameleCyclicSylow2026}.  A connected locally compact analogue is
proved in \cite{DameleLoi2026}.

The example below follows the structural restrictions established in
\cite{Byott2024}.  Its insoluble quotient is
$\GL_3(2)\simeq\PSL_2(7)$, its lowest quotient is the natural module
$\F_2^3$, and its odd layers arise from the Frobenius subgroup
$C_7\rtimes C_3$.  The construction uses an affine copy of $\GL_3(2)$, two
induced cocycles in characteristics $3$ and $7$, and the opposite of an
iterated semidirect product.  Passing to the opposite group is essential for
the orientation of the cocycle.

The example was constructed explicitly in GAP, using the package
\texttt{YangBaxter} \cite{GAP4,YangBaxter}.  The defining maps, the finite
factorizations in $\GL_3(2)$ and all the properties stated below were also
checked computationally.  The proof is nevertheless given directly in
coordinates; the computational checks are mentioned only next to the
calculations to which they apply.

We write $\F_q$ for the field with $q$ elements; in particular, $\F_2$
is the field with two elements.  If $X$ is a group, then $X^{\op}$ denotes
the opposite group: it has the same underlying set as $X$, and its product is
$x\cdot_{\op}y=yx$.

\bigskip
\noindent{\bf Main Theorem}\quad
{\it There exists a finite skew brace $(B,+,\circ)$ such that $(B,+)$ is
soluble and $(B,\circ)$ is insoluble.  More precisely,
$$
 (B,+)\simeq
 \left(\F_7^{\,8}\rtimes
       (\F_3^{\,7}\rtimes\F_2^{\,3})\right)^{\op},
$$
whereas $(B,\circ)$ has a normal soluble subgroup $H$ satisfying
$$
 (B,\circ)/H\simeq\GL_3(2)\simeq\PSL_2(7).
$$
In particular, Byott's conjecture is false.}
\bigskip

\section{Construction of the example}

Put $E=\F_2^3$, written as column vectors, and enumerate its elements by
$$
\begin{array}{llll}
 e_0=(0,0,0)^{\mathsf T},&e_1=(0,0,1)^{\mathsf T},
 &e_2=(0,1,0)^{\mathsf T},&e_3=(0,1,1)^{\mathsf T},\\
 e_4=(1,0,0)^{\mathsf T},&e_5=(1,0,1)^{\mathsf T},
 &e_6=(1,1,0)^{\mathsf T},&e_7=(1,1,1)^{\mathsf T}.
\end{array}
$$
Put $L=\GL_3(2)$.  Let
$$
 A=\begin{pmatrix}0&0&1\\0&1&0\\1&0&0\end{pmatrix},
 \qquad
 B=\begin{pmatrix}0&0&1\\0&1&1\\1&1&0\end{pmatrix}.
$$
In $\operatorname{AGL}(E)$ we represent an affine transformation by a pair
$(M,z)\in L\times E$, acting by $x\mapsto Mx+z$, with multiplication
\begin{equation}\label{eq:affine-law}
 (M,z)(N,t)=(MN,Mt+z).
\end{equation}
Set
$$
 \widehat A=(A,e_2),\qquad \widehat B=(B,e_4),
 \qquad \widehat Y=\widehat A\widehat B^{\,2},
 \qquad \widehat L=\langle\widehat A,\widehat B\rangle.
$$

\begin{lem}\label{lem:affine-copy}
The projection $\widehat L\to L$ is an isomorphism.
\end{lem}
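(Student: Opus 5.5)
The plan is to show that the restriction $\pi\colon\widehat L\to L$, $(M,z)\mapsto M$, of the natural projection $\operatorname{AGL}(E)\to L$ is surjective and that $|\widehat L|\le |L|=168$. Together these give bijectivity.

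Surjectivity is immediate once we know that $\langle A,B\rangle=L$. This holds because $\pi(\widehat A)=A$ and $\pi(\widehat B)=B$. To prove it, I will compute the orders of $A$, $B$ and $AB$ (and of the commutator $[A,B]$ if needed). Then I will observe that the subgroup they generate has order divisible by $2$, $3$ and $7$. No proper subgroup of $L\simeq\PSL_2(7)$ has order divisible by $42$: the maximal subgroups have orders $24$, $24$ and $21$. So $\langle A,B\rangle=L$.

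For the bound on $|\widehat L|$, I will use a finite presentation of $L$. The standard choice is the Coxeter--Moser presentation
$$
 \PSL_2(7)\simeq\langle x,y\mid x^2=y^3=(xy)^7=[x,y]^4=1\rangle .
$$
I first check that the generators $A,B$ have the orders required by some such $(2,3,7)$-type presentation. If $B$ does not have order $3$, I will instead take a suitable word, for example $B^2$ or $AB$ (the element $\widehat Y=\widehat A\widehat B^2$ suggests one such choice), or adapt the presentation to the actual orders found. Having fixed a presentation $\langle x,y\mid R\rangle$ of $L$ with $x\mapsto A$ and $y\mapsto B$ (or the corresponding words), the key step is to verify that $\widehat A$ and $\widehat B$ satisfy every relator in $R$ inside $\operatorname{AGL}(E)$. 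For each relator word $w$, the linear part of $w(\widehat A,\widehat B)$ is $w(A,B)=1$, since the relations hold in $L$. So it suffices to check, using the multiplication law \eqref{eq:affine-law}, that the translation part of $w(\widehat A,\widehat B)$ is $e_0$. For a power $(M,z)^n$ with $M^n=1$, the translation part is $(1+M+\dots+M^{n-1})z$, and this reduces each check to a small computation over $\F_2$.

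Once the relators are verified, von Dyck's theorem yields a surjective homomorphism $L\to\widehat L$ sending the generators to $\widehat A$ and $\widehat B$. Hence $|\widehat L|\le168$, and since $\pi$ is surjective onto a group of order $168$, it is an isomorphism.

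The main obstacle is exactly this translation-part verification. A priori it could fail: $H^1(L,E)\neq0$ for the natural module, so a pair of affine lifts need not generate a complement to $E$. If the relators do not hold, the kernel $\widehat L\cap E$ is an $L$-submodule of the irreducible module $E$, hence equal to $E$, and then $\widehat L=\operatorname{AGL}(E)$. The whole content of the lemma is that the specific translations $e_2$ and $e_4$ were chosen so that this does not happen. A quick sanity check I would do first is to confirm that the translation parts of $\widehat A^{\,2}$ and of $\widehat B^{\,\mathrm{ord}(B)}$ vanish. The longer words $(xy)^7$ and $[x,y]^4$ are then routine but tedious, and they are the calculations to which the paper's computational checks apply.
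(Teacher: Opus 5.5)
Your proposal is correct. Its computational core is the same as the paper's: the paper also verifies the relators of $\langle x,y\mid x^2=y^3=(xy)^7=[x,y]^4=1\rangle$ for $x\mapsto\widehat A$ and $y\mapsto\widehat Y=\widehat A\widehat B^{\,2}$ inside $\operatorname{AGL}(E)$.

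The difference is in how you close the argument.

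\begin{itemize}
\item You get $|\widehat L|\le168$ from von Dyck. You get surjectivity of the projection from $\langle A,B\rangle=L$, using the maximal subgroups of $\PSL_2(7)$.
\item The paper notes that the von Dyck map $\Psi$ is nontrivial, hence injective because $\PSL_2(7)$ is simple, so $|\widehat L|=168$. The kernel of the projection is then a proper normal subgroup of the simple group $\widehat L$, so it is trivial. The image therefore has order $168=|\GL_3(2)|$.
\end{itemize}

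The paper's route needs neither the subgroup structure of $L$ nor the identification $L\simeq\PSL_2(7)$ in advance; it obtains that identification as a by-product. Your route uses only the upper bound from the presentation. Your irreducibility remark also makes clear that the translation parts are the only obstruction: if the relators failed, $\widehat L$ would be all of $\operatorname{AGL}(E)$.

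Two details need tightening.

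First, $B$ has order $7$, so you are forced to use $y\mapsto\widehat Y$. The image of the von Dyck map is then $\langle\widehat A,\widehat Y\rangle$. This equals $\widehat L$ only because $\widehat B=(\widehat B^{\,2})^4=(\widehat A\widehat Y)^4$, which uses $\widehat B^{\,7}=1$ (and $\widehat A^{\,2}=1$). So that check is a required step of the proof, not just a sanity check. The paper makes this point explicitly.

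Second, $AB$ and $[A,B]$ both have order $4$, so you will not find an element of order $3$ among $A$, $B$, $AB$, $[A,B]$. This is harmless: orders $2$ and $7$ already force $\langle A,B\rangle=L$, because every proper subgroup of $\PSL_2(7)$ has order dividing $24$ or $21$, hence not divisible by $14$.
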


\begin{proof}
We use the standard presentation
\begin{equation}\label{eq:psl-presentation}
 \PSL_2(7)=
 \langle x,y\mid x^2=y^3=(xy)^7=[x,y]^4=1\rangle,
\end{equation}
where $[x,y]=x^{-1}y^{-1}xy$.
Direct multiplication with \eqref{eq:affine-law} gives
$$
 \widehat Y=
 \left(
 \begin{pmatrix}0&1&0\\1&0&1\\1&1&0\end{pmatrix},e_7
 \right),
 \qquad
 \widehat A\widehat Y=
 \left(
 \begin{pmatrix}1&1&0\\1&0&1\\0&1&0\end{pmatrix},e_5
 \right).
$$
The four relators in \eqref{eq:psl-presentation} are checked as follows.
First $Ae_2=e_2$, hence
$$
 \widehat A^{\,2}=(A^2,Ae_2+e_2)=(I,e_0).
$$
Using the displayed expression for $\widehat Y$, one obtains
$$
 \widehat Y^{\,2}=
 \left(
 \begin{pmatrix}1&0&1\\1&0&0\\1&1&1\end{pmatrix},e_3
 \right),
 \qquad
 \widehat Y^{\,3}=(I,e_0).
$$
Moreover $\widehat A\widehat Y=\widehat B^{\,2}$.  The orbit of $e_0$ under $\widehat B$ is
$$
 e_0\mapsto e_4\mapsto e_5\mapsto e_3\mapsto e_1
 \mapsto e_2\mapsto e_7\mapsto e_0,
$$
and $e_6$ is fixed.  Hence $\widehat B^{\,7}=1$, so
$(\widehat A\widehat Y)^7=1$.
Finally
$$
 [\widehat A,\widehat Y]=
 \left(
 \begin{pmatrix}0&0&1\\1&1&0\\1&0&0\end{pmatrix},e_6
 \right),
$$
and its successive powers have translation parts
$$
 e_6,\quad e_7,\quad e_3,\quad e_0,
$$
with fourth linear power equal to $I$.  Thus
$[\widehat A,\widehat Y]^4=1$.

The verified relators define a homomorphism
$$
 \Psi:\PSL_2(7)\longrightarrow\operatorname{AGL}(E),
 \qquad x\longmapsto\widehat A,
 \qquad y\longmapsto\widehat Y.
$$
Its image contains $\widehat A$.  It also contains $\widehat B$, because
$\widehat A\widehat Y=\widehat B^{\,2}$ and, as $\widehat B$ has order $7$,
$$
 \widehat B=(\widehat B^{\,2})^4=(\widehat A\widehat Y)^4.
$$
Therefore $\operatorname{im}\Psi=\widehat L$.  The homomorphism $\Psi$ is
nontrivial, so it is injective because $\PSL_2(7)$ is simple.  Hence
$$
 \widehat L\simeq\PSL_2(7)
 \qquad\text{and}\qquad
 |\widehat L|=168.
$$

Now let $\varphi:\widehat L\to L$ be the restriction of the projection
$(M,z)\mapsto M$.  Its kernel is normal in the simple group $\widehat L$.
Since $\varphi(\widehat A)=A\ne I$, the kernel is not all of $\widehat L$;
hence $\ker\varphi=1$.  Thus $|\varphi(\widehat L)|=168$.  On the other
hand,
$$
 |L|=|\GL_3(2)|=(2^3-1)(2^3-2)(2^3-2^2)=7\cdot6\cdot4=168.
$$
Consequently $\varphi(\widehat L)=L$, and $\varphi:\widehat L\to L$ is
an isomorphism.
\end{proof}

By Lemma~\ref{lem:affine-copy}, the projection
$\varphi:\widehat L\to L$ is an isomorphism.  Thus each $g\in L$ has a
unique lift
$$
 \widehat g=\varphi^{-1}(g)\in\widehat L.
$$
Every element of $\operatorname{AGL}(E)$ has a unique expression as a pair
$(M,z)$, so there is a unique vector $\pi(g)\in E$ such that
$$
 \widehat g=(g,\pi(g)).
$$
This defines a function $\pi:L\to E$; equivalently, $\pi(g)$ is the image
of $e_0$ under the affine transformation $\widehat g$.  From
\eqref{eq:affine-law} we obtain
$$
 \widehat g\,\widehat h
 =(g,\pi(g))(h,\pi(h))
 =(gh,g\pi(h)+\pi(g)).
$$
Since $\widehat g\,\widehat h=\widehat{gh}$, comparison of translation
parts gives
\begin{equation}\label{eq:pi-cocycle}
 \pi(gh)=\pi(g)+g\pi(h).
\end{equation}
We transport the natural action of $\widehat L$ on $E$ through the inverse
of $\varphi$.  Explicitly, for $g\in L$ and $x\in E$, define
\begin{equation}\label{eq:affine-action}
 g\st x=\widehat g(x)=gx+\pi(g).
\end{equation}
Then
\begin{align*}
 (gh)\st x
 &=ghx+\pi(gh)\\
 &=ghx+\pi(g)+g\pi(h)\\
 &=g\bigl(hx+\pi(h)\bigr)+\pi(g)\\
 &=g\st(h\st x),
\end{align*}
so \eqref{eq:affine-action} is a group action.
The seven-cycle displayed in the proof omits only the fixed point
$e_6$ of $\widehat B$.  Since
$\widehat A(e_1)=e_6$, the group $\widehat L$ joins
that point to the seven-cycle.  Hence the affine action is transitive on all
eight points of $E$.

Let $F$ be the stabilizer of $e_0$.  Then $|F|=168/8=21$.  Explicitly, direct multiplication of the affine generators gives
$$
 \widehat A\widehat B\widehat A\widehat B^{4}
 \widehat A\widehat B\widehat A\widehat B=(U,e_0),
 \qquad
 \widehat B^{2}\widehat A\widehat B^{4}
 \widehat A\widehat B=(Y,e_0),
$$
where
$$
 U=\begin{pmatrix}0&0&1\\1&1&0\\0&1&1\end{pmatrix},
 \qquad
 Y=\begin{pmatrix}0&0&1\\1&0&1\\1&1&0\end{pmatrix}.
$$
Thus both affine elements fix $e_0$, so their linear parts belong to $F$.
They satisfy
\begin{equation}\label{eq:F21}
 U^7=Y^3=1,
 \qquad YUY^{-1}=U^2.
\end{equation}
For example, direct multiplication gives
$$
 U^2=\begin{pmatrix}0&1&1\\1&1&1\\1&0&1\end{pmatrix},
 \qquad
 Y^2=\begin{pmatrix}1&1&0\\1&1&1\\1&0&0\end{pmatrix},
$$
and
$$
 YUY^2=
 \begin{pmatrix}0&1&1\\1&1&1\\1&0&1\end{pmatrix}=U^2.
$$
Continuing the same multiplication gives $U^7=I$ and $Y^3=I$.
The subgroup $\langle U,Y\rangle$ contains the $21$ distinct elements
$U^mY^k$, because $\langle U\rangle$ is normal of order $7$ and
$\langle U\rangle\cap\langle Y\rangle=1$.  Since it is contained in
the stabilizer of order $21$, we obtain
$$
 F=\langle U,Y\rangle\simeq C_7\rtimes C_3.
$$
Every element of $F$ therefore has a unique expression $U^mY^k$ with
$m\in\F_7$ and $k\in\F_3$.  Here and below, powers indexed by elements of
$\F_7$ or $\F_3$ are well defined modulo $7$ or $3$, respectively; thus
$U^m$ and $Y^k$ do not depend on the chosen integer representatives.
Since $g\st e_0=\pi(g)$ and the action of $L$ on $E$ is
transitive, the map $\pi:L\to E$ is surjective.  Moreover,
$F=\{g\in L:\pi(g)=e_0\}$, and therefore the fibres of $\pi$ are
the right cosets of $F$.  Choose $t_x\in L$ with $\pi(t_x)=x$ for each
$x\in E$, taking $t_{e_0}=1$.  The choices used below are listed in
Table~\ref{tab:transversal}; every word is written directly in the matrices
$A$ and $B$.

\begin{table}[ht]
\centering
\small
\renewcommand{\arraystretch}{1.12}
\begin{tabular}{c@{\qquad}c|c@{\qquad}c}
\hline
$x$ & $t_x$ & $x$ & $t_x$\\
\hline
$e_0$ & $1$ & $e_4$ & $B$\\
$e_1$ & $BAB$ & $e_5$ & $B^3AB^4A$\\
$e_2$ & $A$ & $e_6$ & $BABAB$\\
$e_3$ & $AB^2AB^4A$ & $e_7$ & $AB^2AB^5$\\
\hline
\end{tabular}
\caption{A transversal for the eight right cosets of $F$ in $L$.}
\label{tab:transversal}
\end{table}

For example, $\pi(A)=e_2$ and $\pi(B)=e_4$.  Using
\eqref{eq:pi-cocycle},
$$
 \pi(BAB)
 =\pi(B)+B\pi(A)+BA\pi(B)
 =e_4+Be_2+BAe_4=e_1,
$$
so the entry $t_{e_1}=BAB$ has the required property.  Similarly, direct multiplication in $\operatorname{AGL}(E)$ gives
$$
 \widehat B\widehat A\widehat B\widehat A\widehat B
 =(BABAB,e_6),
$$
so $\pi(BABAB)=e_6$ and the entry $t_{e_6}=BABAB$ is correct.  The remaining entries are checked
in exactly the same way.

For $g\in L$ and $x\in E$, both $gt_x$ and $t_{g\st x}$ send
$e_0$ to $g\st x$: indeed,
$$
 (gt_x)\st e_0=g\st(t_x\st e_0)=g\st x,
 \qquad
 t_{g\st x}\st e_0=g\st x.
$$
Hence $t_{g\st x}^{-1}gt_x$ fixes $e_0$ and belongs to its stabilizer
$F$.  We may therefore define
\begin{equation}\label{eq:sigma}
 \sigma(g,x)=t_{g\st x}^{-1}gt_x\in F.
\end{equation}
Write uniquely
\begin{equation}\label{eq:mk}
 \sigma(g,x)=U^{m(g,x)}Y^{k(g,x)},
 \qquad m(g,x)\in\F_7,
 \quad k(g,x)\in\F_3.
\end{equation}

\begin{lem}\label{lem:schreier}
For all $g,h\in L$ and $x\in E$,
\begin{equation}\label{eq:schreier-identity}
 \sigma(gh,x)=\sigma(g,h\st x)\sigma(h,x).
\end{equation}
Consequently
\begin{align}
 k(gh,x)&=k(g,h\st x)+k(h,x),\label{eq:k-relation}\\
 m(gh,x)&=m(g,h\st x)+2^{k(g,h\st x)}m(h,x).
 \label{eq:m-relation}
\end{align}
\end{lem}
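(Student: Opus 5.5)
The plan is to prove \eqref{eq:schreier-identity} directly from the definition \eqref{eq:sigma} together with the fact that $\st$ is a group action. After that, I read off \eqref{eq:k-relation} and \eqref{eq:m-relation} by multiplying normal forms in $F=\langle U,Y\rangle$ using \eqref{eq:F21}.

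\textbf{The cocycle identity.} I expand the right-hand side of \eqref{eq:schreier-identity} using \eqref{eq:sigma}. The first factor is $\sigma(g,h\st x)=t_{g\st(h\st x)}^{-1}\,g\,t_{h\st x}$, and the second is $\sigma(h,x)=t_{h\st x}^{-1}\,h\,t_x$. In the product, the middle terms $t_{h\st x}t_{h\st x}^{-1}$ cancel, which leaves $t_{g\st(h\st x)}^{-1}\,gh\,t_x$. Since $g\st(h\st x)=(gh)\st x$, because \eqref{eq:affine-action} was shown to be an action, this equals $\sigma(gh,x)$.

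\textbf{The exponent relations.} First I record the commutation rule in $F$. From $YUY^{-1}=U^2$ we get, by induction, $Y^kU^mY^{-k}=U^{2^km}$. This is well defined for $k\in\F_3$ and $m\in\F_7$ because $2^3=8\equiv1\pmod 7$, so $2^k$ makes sense modulo $7$ for $k$ modulo $3$. The paper's remark about representatives covers the remaining well-definedness.

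Next I multiply the normal forms. Writing $a=(m(g,h\st x),k(g,h\st x))$ and $b=(m(h,x),k(h,x))$, the product is
$$U^{a_1}Y^{a_2}U^{b_1}Y^{b_2}=U^{a_1}\bigl(Y^{a_2}U^{b_1}Y^{-a_2}\bigr)Y^{a_2+b_2}=U^{a_1+2^{a_2}b_1}Y^{a_2+b_2}.$$
By \eqref{eq:schreier-identity} this product equals $\sigma(gh,x)=U^{m(gh,x)}Y^{k(gh,x)}$.

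Finally, the expression $U^mY^k$ is unique, as established before \eqref{eq:mk}. Comparing exponents therefore gives \eqref{eq:k-relation} and \eqref{eq:m-relation}.

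There is no real obstacle. The only care needed is the direction of the conjugation formula, namely that $Y$ acts on $U$ by multiplication by $2$ rather than $4$, and that fact is exactly \eqref{eq:F21}.
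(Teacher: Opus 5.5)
Your proposal is correct and follows essentially the same route as the paper. You cancel $t_{h\st x}t_{h\st x}^{-1}$ and use that $\st$ is an action to get the Schreier identity. You then multiply normal forms via $Y^kU^nY^{-k}=U^{2^kn}$ and compare exponents using the uniqueness of the expression $U^mY^k$.
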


\begin{proof}
Insert $t_{h\st x}t_{h\st x}^{-1}$ between $g$ and $h$:
$$
 t_{gh\st x}^{-1}ght_x
 =\bigl(t_{g\st(h\st x)}^{-1}gt_{h\st x}\bigr)
  \bigl(t_{h\st x}^{-1}ht_x\bigr).
$$
This proves equation~\eqref{eq:schreier-identity}.  Since
$Y^kU^nY^{-k}=U^{2^kn}$, one has
$$
 U^mY^kU^nY^\ell
 =U^{m+2^kn}Y^{k+\ell},
$$
and comparison of the two exponents gives equations~\eqref{eq:k-relation}
and~\eqref{eq:m-relation}.
\end{proof}

The exponents for the generators $A$ and $B$ are listed in
Table~\ref{tab:generator-values}.  For instance,
$$
 A\st e_1=e_6,
 \qquad t_{e_6}^{-1}At_{e_1}=U^2,
$$
so $m(A,e_1)=2$ and $k(A,e_1)=0$; similarly,
$$
 B\st e_2=e_7,
 \qquad t_{e_7}^{-1}Bt_{e_2}=U^4Y,
$$
so $m(B,e_2)=4$ and $k(B,e_2)=1$.  Every other entry is obtained by the
same matrix multiplication and unique reduction to $U^mY^k$.

\begin{table}[ht]
\centering
\scriptsize
\renewcommand{\arraystretch}{1.12}
\begin{tabular}{c|ccc|ccc}
\hline
$x$ & $A\st x$ & $m(A,x)$ & $k(A,x)$
    & $B\st x$ & $m(B,x)$ & $k(B,x)$\\
\hline
$e_0$ & $e_2$ & $0$ & $0$ & $e_4$ & $0$ & $0$\\
$e_1$ & $e_6$ & $2$ & $0$ & $e_2$ & $2$ & $0$\\
$e_2$ & $e_0$ & $0$ & $0$ & $e_7$ & $4$ & $1$\\
$e_3$ & $e_4$ & $0$ & $2$ & $e_1$ & $0$ & $2$\\
$e_4$ & $e_3$ & $0$ & $1$ & $e_5$ & $0$ & $1$\\
$e_5$ & $e_7$ & $2$ & $1$ & $e_3$ & $2$ & $1$\\
$e_6$ & $e_1$ & $5$ & $0$ & $e_6$ & $5$ & $0$\\
$e_7$ & $e_5$ & $6$ & $2$ & $e_0$ & $1$ & $1$\\
\hline
\end{tabular}
\caption{The exponents associated with the two generators of $L$.}
\label{tab:generator-values}
\end{table}

The transversal in Table~\ref{tab:transversal}, the two sample reductions
above and all entries of Table~\ref{tab:generator-values} can be computed
directly in GAP from the displayed matrices.

For a field $K$, write $K^E$ for the vector space of all functions
$f:E\to K$, with pointwise addition and scalar multiplication.  Define
$$
 W=\left\{w\in\F_3^E:\sum_{x\in E}w(x)=0\right\},
 \qquad
 P=\F_7^E.
$$
Thus $P$ is the set of all functions $p:E\to\F_7$, endowed with its natural
$\F_7$-vector-space structure.  For each $g\in L$, the map
$x\mapsto g\st x$ is a permutation of $E$.  On the $\F_3$-space $W$, define a linear map
$\rho_W(g):W\to W$ by
\begin{equation}\label{eq:rhoW}
 (\rho_W(g)w)(g\st x)=w(x)
 \qquad(x\in E).
\end{equation}
Equivalently, $(\rho_W(g)w)(y)=w(g^{-1}\st y)$.  Since $g\st-$ is a
permutation of $E$, the sum of the coordinates is preserved, so
$\rho_W(g)w$ again belongs to $W$.

We next define the action on $P$.  For $t\in\F_3$, write $2^t$ for the
value at $t$ of the homomorphism $(\F_3,+)\to\F_7^\times$ sending $1$ to
$2$; this is well defined because $2$ has order $3$ in $\F_7^\times$.
For $g\in L$, define a linear map $\rho_P(g):P\to P$ by
\begin{equation}\label{eq:rhoP}
 (\rho_P(g)p)(g\st x)=2^{k(g,x)}p(x)
 \qquad(x\in E).
\end{equation}
The subscripts indicate that the two maps $\rho_W(g)$ and $\rho_P(g)$ act
on $W$ and $P$, respectively.  We now verify that
$g\mapsto\rho_W(g)$ and $g\mapsto\rho_P(g)$ are homomorphisms into
$\GL(W)$ and $\GL(P)$, respectively.  At the point $gh\st x$,
\begin{align*}
 (\rho_W(g)\rho_W(h)w)(gh\st x)
 &= (\rho_W(h)w)(h\st x)=w(x)\\
 &= (\rho_W(gh)w)(gh\st x),
\end{align*}
and, using Equation~\eqref{eq:k-relation},
\begin{align*}
 (\rho_P(g)\rho_P(h)p)(gh\st x)
 &=2^{k(g,h\st x)}(\rho_P(h)p)(h\st x)\\
 &=2^{k(g,h\st x)+k(h,x)}p(x)\\
 &=2^{k(gh,x)}p(x)
 =(\rho_P(gh)p)(gh\st x).
\end{align*}
The multiplier $2^{k(g,x)}$ in Equation~\eqref{eq:rhoP} is used precisely
in this calculation: the additive relation for $k$ in
Equation~\eqref{eq:k-relation} makes the multipliers compose.  The same
multiplier also matches the coefficient in Equation~\eqref{eq:m-relation},
which was obtained from $Y^kU^nY^{-k}=U^{2^kn}$, and this is what yields the
cocycle identity for $\beta_P$ below.

Define $\beta_W:L\to\F_3^E$ and $\beta_P:L\to P$ by
\begin{equation}\label{eq:betas}
 \beta_W(g)(g\st x)=k(g,x),
 \qquad
 \beta_P(g)(g\st x)=m(g,x).
\end{equation}
We identify a function $f:E\to\F_q$ with its coordinate vector
$$
 \bigl(f(e_0),f(e_1),\ldots,f(e_7)\bigr)
$$
with respect to the ordered list $e_0,e_1,\ldots,e_7$.  In this notation,
the values of the three maps on the generators $A$ and $B$ are
\begin{align*}
 \pi(A)&=e_2,
 &\beta_W(A)&=(0,0,0,1,2,2,0,1),
 &\beta_P(A)&=(0,5,0,0,0,6,2,2),\\
 \pi(B)&=e_4,
 &\beta_W(B)&=(1,2,0,1,0,1,0,1),
 &\beta_P(B)&=(1,0,2,2,0,0,5,4).
\end{align*}
The two vectors $\beta_W(A)$ and $\beta_W(B)$ have coordinate sum $0$, and therefore lie in $W$.

\begin{prop}\label{prop:shapiro}
For all $g,h\in L$,
\begin{align}
 \beta_W(gh)&=\beta_W(g)+\rho_W(g)\beta_W(h),
 \label{eq:betaW-cocycle}\\
 \beta_P(gh)&=\beta_P(g)+\rho_P(g)\beta_P(h).
 \label{eq:betaP-cocycle}
\end{align}
In particular $\beta_W(g)\in W$ for every $g\in L$.
\end{prop}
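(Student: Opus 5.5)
Both identities follow from Lemma~\ref{lem:schreier} once we evaluate at the right points. The natural place to evaluate $\beta(gh)$ is $gh\st x$, where $x$ runs over $E$. This covers every point, because $x\mapsto gh\st x$ is a bijection of $E$. Two functions on $E$ are therefore equal as soon as they agree at every point of the form $gh\st x$.

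For \eqref{eq:betaW-cocycle}, we start from the definition \eqref{eq:betas}, which gives $\beta_W(gh)(gh\st x)=k(gh,x)$. By \eqref{eq:k-relation} this equals $k(g,h\st x)+k(h,x)$. We then read off the two terms.
\begin{itemize}
\item The first term is $k(g,h\st x)=\beta_W(g)(g\st(h\st x))=\beta_W(g)(gh\st x)$, using the definition applied to $g$ at the point $h\st x$ and the fact that $\st$ is an action.
\item The second term is $k(h,x)=\beta_W(h)(h\st x)$. By \eqref{eq:rhoW}, evaluating $\rho_W(g)$ at $g\st y$ with $y=h\st x$ gives $(\rho_W(g)\beta_W(h))(gh\st x)=\beta_W(h)(h\st x)$.
\end{itemize}
Adding the two terms gives \eqref{eq:betaW-cocycle} at the arbitrary point $gh\st x$. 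Note that $\rho_W$ was defined on $W$, but formula \eqref{eq:rhoW} makes sense on all of $\F_3^E$, and we use it there, since $W$-membership is not yet known.

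For \eqref{eq:betaP-cocycle} we argue in the same way, using \eqref{eq:m-relation}:
\[
\beta_P(gh)(gh\st x)=m(gh,x)=m(g,h\st x)+2^{k(g,h\st x)}m(h,x).
\]
The first summand is $\beta_P(g)(gh\st x)$, exactly as before. For the second summand, \eqref{eq:rhoP} with $p=\beta_P(h)$ at the point $y=h\st x$ gives
\[
(\rho_P(g)\beta_P(h))(g\st y)=2^{k(g,y)}\beta_P(h)(y)=2^{k(g,h\st x)}m(h,x).
\]
This is precisely the twisted term, so the identity holds. The main point needing care is that the multiplier in $\rho_P$ is evaluated at the correct point $h\st x$, which matches the exponent in \eqref{eq:m-relation}. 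No further obstacle is expected.

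For the final claim we use the sum map $s(f)=\sum_{x\in E}f(x)$. It is linear and invariant under $\rho_W(g)$, because $\rho_W(g)$ merely permutes coordinates. Applying $s$ to \eqref{eq:betaW-cocycle} gives $s(\beta_W(gh))=s(\beta_W(g))+s(\beta_W(h))$, so $s\circ\beta_W:L\to\F_3$ is a homomorphism. We have $L\simeq\PSL_2(7)$, which is simple and nonabelian, and $\F_3$ is abelian, so this homomorphism is trivial. Hence $\beta_W(g)\in W$ for all $g$. Alternatively, one can use the checked values $s(\beta_W(A))=s(\beta_W(B))=0$ together with the fact that $A$ and $B$ generate $L$ by Lemma~\ref{lem:affine-copy}.
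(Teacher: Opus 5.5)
Your proposal is correct. Your verification of the two cocycle identities is the paper's own: evaluate at $gh\st x$ and apply \eqref{eq:k-relation} and \eqref{eq:m-relation}. Your remark that $\rho_W(g)$ must be read on all of $\F_3^E$ at this stage makes explicit something the paper leaves implicit.

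For the membership $\beta_W(g)\in W$, your main argument takes a different route.

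\textbf{The paper's route.} It uses the computed vectors $\beta_W(A)$ and $\beta_W(B)$, whose coordinate sums vanish. By \eqref{eq:betaW-cocycle} and the $L$-invariance of $W$, the set $S=\{g\in L:\beta_W(g)\in W\}$ is closed under products. So $S$ contains the submonoid generated by $A$ and $B$, which is all of the finite group $L$.

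\textbf{Your route.} Since each $\rho_W(g)$ only permutes coordinates, taking the coordinate sum turns the cocycle identity into a homomorphism $L\to\F_3$. In fact $g\mapsto\sum_x k(g,x)$ is essentially the transfer of $L$ into $F/F'\simeq C_3$. This homomorphism is trivial because $L\simeq\PSL_2(7)$ is perfect.

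\textbf{Comparison.} Your argument needs no entries of Table~\ref{tab:generator-values}, so it is insensitive to computational slips there. It uses only the simplicity of $L$, already established in Lemma~\ref{lem:affine-copy}. The paper's argument is purely elementary once the two displayed vectors are accepted, which is why they are displayed. Your fallback via generators is the paper's argument in abbreviated form: it implicitly relies on closure of $S$ under products, which follows from the cocycle identity and $\rho_W(g)W\subseteq W$.
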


\begin{proof}
Evaluate both sides of \eqref{eq:betaW-cocycle} at
$gh\st x=g\st(h\st x)$ and use \eqref{eq:k-relation}.  The same evaluation,
using \eqref{eq:m-relation}, proves \eqref{eq:betaP-cocycle}.  Since $W$ is $L$-invariant, the set
$$
 S=\{g\in L:\beta_W(g)\in W\}
$$
is closed under multiplication by equation~\eqref{eq:betaW-cocycle}.
It contains $1,A,B$.  Since $L$ is finite and generated by $A,B$, the
submonoid generated by $A,B$ is all of $L$; hence $S=L$ and
$\beta_W(g)\in W$ for every $g\in L$.

\end{proof}

For $e\in E$ and a function $f$ on $E$, put
$$
 (\tau_e f)(x)=f(x+e).
$$
Each $\tau_e$ preserves $W$, because translation permutes the eight elements
of $E$, and $\tau_e\tau_{e'}=\tau_{e+e'}$.  Thus $E$ acts on $W$ by
translations.  Define
$$
 Q=W\rtimes E
$$
by
\begin{equation}\label{eq:Q-law}
 (w,e)(w',e')=(w+\tau_e w',e+e').
\end{equation}
Let $Q$ act on $P$ by
\begin{equation}\label{eq:Q-action}
 ((w,e)\cdot p)(x)=2^{w(x)}p(x+e).
\end{equation}
Indeed, for $q=(w,e)$ and $q'=(w',e')$,
\begin{align*}
 (q\cdot(q'\cdot p))(x)
 &=2^{w(x)}2^{w'(x+e)}p(x+e+e')\\
 &=2^{(w+\tau_e w')(x)}p(x+e+e')\\
 &=((qq')\cdot p)(x).
\end{align*}
Thus we may form
$$
 N_0=P\rtimes Q.
$$
In coordinates its multiplication is
\begin{equation}\label{eq:N0-law}
 (p,w,e)(p',w',e')=
 \left(
 p+(w,e)\cdot p',\,
 w+\tau_e w',\,
 e+e'
 \right).
\end{equation}
The chain
$$
 1<P<P\rtimes W<N_0
$$
has abelian factors $P,W,E$, so $N_0$ is soluble of derived length at most
three.  Moreover
\begin{equation}\label{eq:N-order}
 |N_0|=|P||W||E|=7^8\,3^7\,2^3=100860958296.
\end{equation}

We now use the ordinary linear action of $L$ on $E$, rather than the
affine action $\star$.  It induces, on both $P$ and $W$, the operation
$$
 ({}^gf)(x)=f(g^{-1}x).
$$
This action is needed to define automorphisms of $N_0$.  For each $g\in L$,
define a map
\begin{equation}\label{eq:theta}
 \vartheta_g:N_0\longrightarrow N_0,
 \qquad
 \vartheta_g(p,w,e)=({}^gp,{}^gw,ge).
\end{equation}
We prove that $\vartheta_g$ is an automorphism of $N_0$.  First, the map
$(w,e)\mapsto({}^gw,ge)$ is an automorphism of $Q$.  For every $f:E\to\F_3$ one has
$$
 {}^g(\tau_e f)(x)=f(g^{-1}x+e)
 =\tau_{ge}({}^gf)(x).
$$
Consequently
$$
 ({}^gw,ge)({}^gw',ge')
 =({}^g(w+\tau_e w'),g(e+e')).
$$
It remains to check compatibility with the action of $Q$ on $P$.  At
$x\in E$,
\begin{align*}
 {}^g((w,e)\cdot p)(x)
 &=2^{w(g^{-1}x)}p(g^{-1}x+e),\\
 (({}^gw,ge)\cdot{}^gp)(x)
 &=2^{w(g^{-1}x)}p(g^{-1}(x+ge)),
\end{align*}
and these expressions coincide because
$g^{-1}(x+ge)=g^{-1}x+e$.  Thus $\vartheta_g\in\Aut(N_0)$.  The
identity $\vartheta_{gh}=\vartheta_g\vartheta_h$ follows directly from
precomposition, so $g\mapsto\vartheta_g$ is a homomorphism
$L\to\Aut(N_0)$.

Put
\begin{equation}\label{eq:c-g}
 c_g=(\beta_P(g),\beta_W(g),\pi(g))\in N_0.
\end{equation}
For the identity element $1\in L$, the unique lift is
$\widehat 1=(I,e_0)$, so $\pi(1)=e_0$.  Moreover
$t_x^{-1}1t_x=1$ for every $x\in E$, and hence
$m(1,x)=k(1,x)=0$.  Therefore
$$
 \beta_W(1)=0,\qquad \beta_P(1)=0,\qquad
 c_1=(0,0,e_0),
$$
where the first two zeros denote the zero functions in $P$ and $W$.
For the generators $A,B\in L$, the vectors displayed above give explicitly
\begin{align*}
 c_A&=\bigl((0,5,0,0,0,6,2,2),
             (0,0,0,1,2,2,0,1),e_2\bigr),\\
 c_B&=\bigl((1,0,2,2,0,0,5,4),
             (1,2,0,1,0,1,0,1),e_4\bigr).
\end{align*}
Here the first coordinate is read in $P=\F_7^E$, the second in
$W\leq\F_3^E$, and the third in $E=\F_2^3$.

\begin{prop}\label{prop:N0-cocycle}
For all $g,h\in L$,
\begin{equation}\label{eq:N0-cocycle}
 c_{gh}=c_g\,\vartheta_g(c_h)
 \qquad\text{in }N_0.
\end{equation}
\end{prop}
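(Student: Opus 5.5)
The plan is to expand the right-hand side of \eqref{eq:N0-cocycle} coordinatewise using the multiplication law \eqref{eq:N0-law} and the definition \eqref{eq:theta} of $\vartheta_g$. Each of the three coordinates should then reduce to one of the cocycle identities already proved: \eqref{eq:pi-cocycle} for $\pi$, and \eqref{eq:betaW-cocycle}, \eqref{eq:betaP-cocycle} of Proposition~\ref{prop:shapiro}. By \eqref{eq:c-g} and \eqref{eq:theta}, $\vartheta_g(c_h)=({}^g\beta_P(h),{}^g\beta_W(h),g\pi(h))$. Hence \eqref{eq:N0-law} gives
$$
 c_g\,\vartheta_g(c_h)=\Bigl(\beta_P(g)+(\beta_W(g),\pi(g))\cdot{}^g\beta_P(h),\;
 \beta_W(g)+\tau_{\pi(g)}\,{}^g\beta_W(h),\;\pi(g)+g\pi(h)\Bigr).
$$
The third coordinate equals $\pi(gh)$ by \eqref{eq:pi-cocycle}.

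The key observation, which I will isolate as a small lemma, is that the affine action factors through the two operations appearing in $N_0$. For every $y\in E$ we have $g^{-1}\st y=g^{-1}y+\pi(g^{-1})$. Applying \eqref{eq:pi-cocycle} to $g^{-1}g=1$ and using $\pi(1)=e_0$ gives $\pi(g^{-1})=-g^{-1}\pi(g)=g^{-1}\pi(g)$, since we are in characteristic $2$. Therefore $g^{-1}\st y=g^{-1}(y+\pi(g))$. Consequently, for any function $f$ on $E$,
$$
 (\tau_{\pi(g)}\,{}^gf)(y)=f\bigl(g^{-1}(y+\pi(g))\bigr)=f(g^{-1}\st y)=(\rho_W(g)f)(y).
$$
So $\tau_{\pi(g)}\circ{}^g$ coincides with $\rho_W(g)$ on $W$. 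The second coordinate is then $\beta_W(g)+\rho_W(g)\beta_W(h)=\beta_W(gh)$ by \eqref{eq:betaW-cocycle}.

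For the first coordinate, I evaluate at a point written as $g\st y$, which covers all of $E$ because $g\st-$ is a permutation. Using \eqref{eq:Q-action}, the identity just proved, and \eqref{eq:betas}, one gets
$$
 \bigl((\beta_W(g),\pi(g))\cdot{}^gp\bigr)(g\st y)=2^{\beta_W(g)(g\st y)}\,p(g^{-1}\st(g\st y))=2^{k(g,y)}p(y)=(\rho_P(g)p)(g\st y)
$$
by \eqref{eq:rhoP}. Thus $(\beta_W(g),\pi(g))\cdot{}^gp=\rho_P(g)p$. Taking $p=\beta_P(h)$, the first coordinate becomes $\beta_P(g)+\rho_P(g)\beta_P(h)=\beta_P(gh)$ by \eqref{eq:betaP-cocycle}. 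This completes the proof.

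The only real obstacle is the bookkeeping in this last step. One must check that the linear twist ${}^g$ followed by the translation $\tau_{\pi(g)}$ reproduces exactly the affine permutation $g^{-1}\st-$, with the correct orientation (inverse versus forward, and translation before versus after the linear part). One must also check that the multiplier in \eqref{eq:Q-action} is evaluated at the right point, so that it yields $2^{k(g,y)}$ and not $2^{k(g,g^{-1}\st y)}$. The characteristic-$2$ identity $\pi(g^{-1})=g^{-1}\pi(g)$ is what makes these orientations match.
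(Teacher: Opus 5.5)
Your proposal is correct and follows the paper's proof: you expand $c_g\,\vartheta_g(c_h)$ coordinatewise with \eqref{eq:N0-law}, identify $\tau_{\pi(g)}\circ{}^g$ with $\rho_W(g)$ and the twisted analogue with $\rho_P(g)$, and then invoke \eqref{eq:pi-cocycle} and Proposition~\ref{prop:shapiro}. The only addition is that you justify $g^{-1}(y+\pi(g))=g^{-1}\st y$ via $\pi(g^{-1})=g^{-1}\pi(g)$ in characteristic $2$, a step the paper uses without comment.
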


\begin{proof}
The $E$-coordinate is \eqref{eq:pi-cocycle}.  For the $W$-coordinate, note
that
$$
 \tau_{\pi(g)}({}^gf)(x)
 =f(g^{-1}(x+\pi(g)))
 =(\rho_W(g)f)(x).
$$
Thus the $W$-coordinate of $c_g\vartheta_g(c_h)$ is
$$
 \beta_W(g)+\rho_W(g)\beta_W(h)=\beta_W(gh).
$$
For the $P$-coordinate, \eqref{eq:Q-action} gives
\begin{align*}
 &\bigl((\beta_W(g),\pi(g))\cdot{}^g\beta_P(h)\bigr)(x)\\
 &\qquad=2^{\beta_W(g)(x)}
 \beta_P(h)(g^{-1}(x+\pi(g)))
 =(\rho_P(g)\beta_P(h))(x).
\end{align*}
Hence the $P$-coordinate is
$\beta_P(g)+\rho_P(g)\beta_P(h)=\beta_P(gh)$.

\end{proof}

Let
$$
 N=N_0^{\op}.
$$
From this point onward, $+$ denotes the generally noncommutative group law of
$N$, and $-n$ denotes the inverse of $n$ in $(N,+)$.  To avoid any ambiguity,
write $x\cdot_0y$ for the product of $x$ and $y$ in $N_0$.  The additive law
is then
\begin{equation}\label{eq:opposite-law}
 n+m=m\cdot_0 n
 \qquad(n,m\in N).
\end{equation}
Thus the order of the two factors is reversed when the sum is evaluated in
$N_0$, and $-n=n^{-1}$ when the right-hand side is computed in $N_0$.  Additions
inside coordinate triples continue to denote the vector-space operations in
$P$, $W$ and $E$.  The map $n\mapsto n^{-1}$ is an isomorphism
$N_0\to(N,+)$, so $(N,+)$ is soluble and has the order in
Equation~\eqref{eq:N-order}.  Every automorphism of $N_0$, in particular
every $\vartheta_g$, is also an automorphism of the opposite group $N$.

For $x\in N$, the notation $\Inn_x^N$ always refers to the additive group:
$$
 \Inn_x^N(y)=x+y-x.
$$
In particular, $\Inn_{-c_g}^N(y)=(-c_g)+y+c_g$.

Equation \eqref{eq:N0-cocycle} becomes
\begin{equation}\label{eq:op-cocycle}
 c_{gh}=\vartheta_g(c_h)+c_g
 \qquad\text{in }N.
\end{equation}
Define
\begin{equation}\label{eq:lambda-g}
 \lambda_g=\Inn^N_{-c_g}\vartheta_g,
 \qquad
 \lambda_g(n)=(-c_g)+\vartheta_g(n)+c_g.
\end{equation}

\begin{lem}\label{lem:lambda-hom}
The map $g\mapsto\lambda_g$ is a homomorphism
$L\to\Aut(N)$, and
\begin{equation}\label{eq:representative-cocycle}
 c_{gh}=c_g+\lambda_g(c_h).
\end{equation}
Moreover, for the representatives $c_g$,
\begin{equation}\label{eq:eta-representatives}
 \eta_g:=\Inn^N_{c_g}\lambda_g=\vartheta_g.
\end{equation}
\end{lem}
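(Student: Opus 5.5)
The plan is a direct computation in $(N,+)$, using only that each $\vartheta_g$ is an automorphism of $N$, that $g\mapsto\vartheta_g$ is a homomorphism, and the opposite-group cocycle identity \eqref{eq:op-cocycle}.

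\textbf{Step 1: $\lambda_g\in\Aut(N)$.} Each $\lambda_g$ is the composite of $\vartheta_g\in\Aut(N)$ with the inner automorphism $\Inn^N_{-c_g}$ of $(N,+)$. So $\lambda_g$ is an automorphism of $N$.

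\textbf{Step 2: the homomorphism property.} I will use the standard rule $\vartheta\,\Inn^N_x\,\vartheta^{-1}=\Inn^N_{\vartheta(x)}$, valid for any automorphism $\vartheta$ of $N$. It gives
$$
 \lambda_g\lambda_h
 =\Inn^N_{-c_g}\vartheta_g\Inn^N_{-c_h}\vartheta_h
 =\Inn^N_{-c_g}\Inn^N_{-\vartheta_g(c_h)}\vartheta_g\vartheta_h
 =\Inn^N_{-c_g-\vartheta_g(c_h)}\vartheta_{gh}.
$$
By \eqref{eq:op-cocycle}, $c_{gh}=\vartheta_g(c_h)+c_g$, so
$$
 -c_{gh}=-c_g-\vartheta_g(c_h).
$$
Hence $\lambda_g\lambda_h=\Inn^N_{-c_{gh}}\vartheta_{gh}=\lambda_{gh}$. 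The only point needing care is the order of terms in the noncommutative sum. Here the orientation of the opposite group is exactly what makes the inverse come out as $-c_g-\vartheta_g(c_h)$ rather than in the reversed order. This is the step where a sign or order error is most likely, so I will check it carefully against \eqref{eq:opposite-law}.

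\textbf{Step 3: identity \eqref{eq:representative-cocycle}.} From the definition,
$$
 c_g+\lambda_g(c_h)=c_g+(-c_g)+\vartheta_g(c_h)+c_g=\vartheta_g(c_h)+c_g=c_{gh},
$$
again by \eqref{eq:op-cocycle}.

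\textbf{Step 4: identity \eqref{eq:eta-representatives}.} Since $\Inn^N_{c_g}\Inn^N_{-c_g}$ is the identity,
$$
 \Inn^N_{c_g}\lambda_g=\Inn^N_{c_g}\Inn^N_{-c_g}\vartheta_g=\vartheta_g.
$$
I expect no genuine obstacle beyond the bookkeeping of addition order in Step 2.
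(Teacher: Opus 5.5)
Your proposal is correct and matches the paper's proof: the paper also obtains \eqref{eq:representative-cocycle} by expanding $\lambda_g(c_h)$ and applying \eqref{eq:op-cocycle}, proves $\lambda_g\lambda_h=\lambda_{gh}$ by moving $\vartheta_g$ past $\Inn^N_{-c_h}$ and recognising $(-c_g)+\vartheta_g(-c_h)=-c_{gh}$, and reads \eqref{eq:eta-representatives} directly off the definition. The order-sensitive step in your Step 2 is just the general rule $-(a+b)=(-b)+(-a)$ applied to $c_{gh}=\vartheta_g(c_h)+c_g$, and you handle it correctly.
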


\begin{proof}
Using \eqref{eq:op-cocycle},
\begin{align*}
 c_g+\lambda_g(c_h)
 &=c_g+(-c_g)+\vartheta_g(c_h)+c_g\\
 &=\vartheta_g(c_h)+c_g=c_{gh},
\end{align*}
which proves \eqref{eq:representative-cocycle}.  Also
\begin{align*}
 \lambda_g\lambda_h
 &=\Inn^N_{-c_g}\vartheta_g
   \Inn^N_{-c_h}\vartheta_h\\
 &=\Inn^N_{(-c_g)+\vartheta_g(-c_h)}\vartheta_{gh}\\
 &=\Inn^N_{-c_{gh}}\vartheta_{gh}=\lambda_{gh}.
\end{align*}
Finally \eqref{eq:eta-representatives} follows immediately from
\eqref{eq:lambda-g}.  Notice that this equality is asserted for the representatives $c_g$; for a general element $h+c_g$ the corresponding
$\eta$ is $\Inn_h^N\vartheta_g$.

\end{proof}

Fix $e_0\in E$ and define
\begin{equation}\label{eq:H}
 H=\{(p,w,e_0)\in N_0:p(e_0)=0,\ w(e_0)=0\}.
\end{equation}
This is a subgroup of $N_0$, and hence also of $N=N_0^{\op}$.  Indeed, if
$h=(p,w,e_0)$ and $h'=(p',w',e_0)$ satisfy the two vanishing conditions, then
equation~\eqref{eq:N0-law} gives
$$
 hh'=\bigl(p+2^w p',w+w',e_0\bigr),
$$
where exponentiation and multiplication are pointwise; explicitly,
$$
 (2^wp')(x)=2^{w(x)}p'(x).
$$
Its two function coordinates vanish at $e_0$.  Moreover
$$
 (p,w,e_0)^{-1}=\bigl(-2^{-w}p,-w,e_0\bigr),
$$
which has the same property.  Finally, $H$ is $\vartheta(L)$-invariant because every linear
transformation $g\in L=\GL_3(2)$ fixes $e_0$, and therefore
$({}^gf)(e_0)=f(e_0)$ for every function $f$ on $E$.

There are $7^7$ functions $p\in P$ satisfying $p(e_0)=0$.  The
conditions $w(e_0)=0$ and $\sum_xw(x)=0$ leave six free coordinates in
$\F_3$.  Therefore
\begin{equation}\label{eq:H-order}
 |H|=7^7 3^6=600362847,
 \qquad |N:H|=168.
\end{equation}

For $n=(p,w,e)\in N_0$, put
\begin{equation}\label{eq:q-map}
 q(n)=(e,w(e),p(e))\in E\times\F_3\times\F_7.
\end{equation}

\begin{lem}\label{lem:coset-parameters}
For each $g\in L$,
$$
 c_gH=q^{-1}\bigl(q(c_g)\bigr)
$$
and
\begin{equation}\label{eq:c-parameter}
 q(c_g)=\bigl(\pi(g),k(g,e_0),m(g,e_0)\bigr).
\end{equation}
The $168$ triples $q(c_g)$, with $g\in L$, are pairwise distinct.
\end{lem}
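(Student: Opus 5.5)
The plan is to compute directly in $N_0$ with the multiplication law \eqref{eq:N0-law}, reading $c_gH$ as a left coset in $N_0$; this is the same set as the right coset $H+c_g$ of $(N,+)$. There are three parts: first show that $q$ is constant on every left coset $nH$, then identify the coset with the whole fibre by counting, and finally evaluate $q(c_g)$ and prove that these values are injective in $g$.

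\emph{Step 1: $q$ is constant on cosets.} Take $n=(p,w,e)\in N_0$ and $h=(p',w',e_0)\in H$. By \eqref{eq:N0-law},
$$
 n\cdot_0 h=\bigl(p+2^{w}\tau_e p',\,w+\tau_e w',\,e\bigr).
$$
Its $E$-coordinate is still $e$. Evaluating the function coordinates at $x=e$, where $e+e=e_0$, gives $w(e)+w'(e_0)=w(e)$ and $p(e)+2^{w(e)}p'(e_0)=p(e)$. Hence $q(nh)=q(n)$, so $c_gH\subseteq q^{-1}(q(c_g))$.

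\emph{Step 2: count the fibre.} Fix a triple $(e,a,b)\in E\times\F_3\times\F_7$. Its preimage under $q$ consists of the elements with $E$-coordinate $e$, with $w\in W$ satisfying $w(e)=a$, and with $p(e)=b$. The condition $w(e)=a$ leaves $6$ free coordinates of $w$, since the last one is forced by the zero-sum condition. The condition $p(e)=b$ leaves $7$ free coordinates of $p$. So every fibre has $3^6 7^7=|H|$ elements by \eqref{eq:H-order}. Since the coset also has $|H|$ elements and lies in the fibre, the two sets coincide.

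\emph{Step 3: compute $q(c_g)$.} The $E$-coordinate of $c_g$ is $\pi(g)$. Because $g\st e_0=\pi(g)$, the definitions \eqref{eq:betas} give
$$
 \beta_W(g)(\pi(g))=\beta_W(g)(g\st e_0)=k(g,e_0),
 \qquad
 \beta_P(g)(\pi(g))=m(g,e_0).
$$
This proves \eqref{eq:c-parameter}.

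\emph{Step 4: distinctness.} This is the only step that uses the transversal. Note that $\sigma(g,e_0)=t_{g\st e_0}^{-1}g\,t_{e_0}=t_{\pi(g)}^{-1}g$, because $t_{e_0}=1$. Suppose $q(c_g)=q(c_h)$. Then $\pi(g)=\pi(h)$, $k(g,e_0)=k(h,e_0)$ and $m(g,e_0)=m(h,e_0)$. By the uniqueness of the normal form \eqref{eq:mk}, the last two equalities give $\sigma(g,e_0)=\sigma(h,e_0)$. Therefore
$$
 g=t_{\pi(g)}\sigma(g,e_0)=t_{\pi(h)}\sigma(h,e_0)=h.
$$
Every step is a direct verification; the only point needing care is the orientation of the coset, which is handled by working in $N_0$ throughout.
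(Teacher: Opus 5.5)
Your proof is correct and follows the paper's argument essentially step for step. You get the inclusion $c_gH\subseteq q^{-1}(q(c_g))$ by evaluating the $N_0$-product at the coordinate $e$, where $\tau_e$ brings in the value at $e_0$. You get equality by counting the fibre as $7^7 3^6=|H|$, and distinctness from $t_{\pi(g)}^{-1}g=\sigma(g,e_0)=U^{m(g,e_0)}Y^{k(g,e_0)}$. The differences are only cosmetic: you prove $q$ is constant on every coset $nH$, and you write out the evaluations $\beta_W(g)(g\st e_0)=k(g,e_0)$ and $\beta_P(g)(g\st e_0)=m(g,e_0)$, which the paper leaves implicit (in Step~4, equal exponents give equal $\sigma$-values directly, without needing uniqueness of the normal form).
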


\begin{proof}
Let $e=\pi(g)$ and let $h=(p',w',e_0)\in H$.  By \eqref{eq:N0-law}, the
$E$-coordinate of $c_gh$ is $e$, while at the coordinate $e$ one has
\begin{align*}
 (\beta_W(g)+\tau_e w')(e)&=\beta_W(g)(e)+w'(e_0)
 =\beta_W(g)(e),\\
 \bigl(\beta_P(g)+(\beta_W(g),e)\cdot p'\bigr)(e)
 &=\beta_P(g)(e)+2^{\beta_W(g)(e)}p'(e_0)\\
 &=\beta_P(g)(e).
\end{align*}
Hence $c_gH\subseteq q^{-1}(q(c_g))$.  Once the three displayed coordinates
are fixed, the remaining coordinates consist of seven free entries in
$\F_7$ and six free entries in $\F_3$.  Thus
$$
 |q^{-1}(q(c_g))|=7^7 3^6=|H|,
$$
so equality holds.

At $x=e_0$, equation \eqref{eq:sigma} gives
$$
 t_{\pi(g)}^{-1}g=U^{m(g,e_0)}Y^{k(g,e_0)}.
$$
Consequently every $g\in L$ has a unique expression
$$
 g=t_xU^mY^k,
 \qquad (x,m,k)\in E\times\F_7\times\F_3.
$$
Equation~\eqref{eq:c-parameter} associates to $g$ the reordered triple
$(x,k,m)$.  Therefore the $8\cdot3\cdot7=168$ triples are pairwise
distinct.

\end{proof}

In the opposite group $N$, the right cosets $c_gH$ of $N_0$ are the left
cosets $H+c_g$.  Lemma~\ref{lem:coset-parameters} and
\eqref{eq:H-order} give the disjoint decomposition
\begin{equation}\label{eq:coset-partition}
 N=\bigsqcup_{g\in L}H+c_g.
\end{equation}

Form the semidirect product
\begin{equation}\label{eq:G}
 G=H\rtimes_{\vartheta}L,
\end{equation}
where $H$ is regarded as a subgroup of the additive group $(N,+)$.  Thus
\begin{equation}\label{eq:G-law}
 (h,g)(k,\ell)=
 \bigl(h+\vartheta_g(k),g\ell\bigr).
\end{equation}
Its order is
\begin{equation}\label{eq:G-order}
 |G|=|H||L|=7^7 3^6\cdot168
 =7^8 3^7 2^3=|N|.
\end{equation}
Since $G/H\simeq L\simeq\PSL_2(7)$, the group $G$ is insoluble.

Define
\begin{equation}\label{eq:b}
 b:G\longrightarrow N,
 \qquad b(h,g)=h+c_g,
\end{equation}
and
\begin{equation}\label{eq:lambda-full}
 \lambda_{(h,g)}=\lambda_g.
\end{equation}
The map in \eqref{eq:lambda-full} is a homomorphism because the projection
$G\to L$ and $g\mapsto\lambda_g$ are homomorphisms.

\begin{prop}\label{prop:bijective-cocycle}
The map $b$ is bijective and satisfies
\begin{equation}\label{eq:full-cocycle}
 b(xy)=b(x)+\lambda_x(b(y))
 \qquad(x,y\in G).
\end{equation}
\end{prop}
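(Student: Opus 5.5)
Bijectivity comes from the coset partition \eqref{eq:coset-partition}, and the cocycle identity will be a direct computation in $(N,+)$ using Lemma~\ref{lem:lambda-hom}.

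\emph{Bijectivity.} Since $|G|=|N|$ by \eqref{eq:G-order}, it suffices to prove surjectivity, or equivalently injectivity. By \eqref{eq:coset-partition}, every $n\in N$ lies in exactly one coset $H+c_g$. So $n=h+c_g$ for a unique $g\in L$, and then $h=n-c_g$ is determined. Hence $b(h,g)=h+c_g$ is bijective. Lemma~\ref{lem:coset-parameters} is what guarantees that the cosets for distinct $g$ are distinct, so that the union is disjoint.

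\emph{Cocycle identity.} Let $x=(h,g)$ and $y=(k,\ell)$. By \eqref{eq:G-law}, $xy=(h+\vartheta_g(k),g\ell)$, so
$$
 b(xy)=h+\vartheta_g(k)+c_{g\ell}.
$$
For the right-hand side, $\lambda_x=\lambda_g$ is an automorphism of $(N,+)$, so
$$
 b(x)+\lambda_x(b(y))=h+c_g+\lambda_g(k)+\lambda_g(c_\ell).
$$
I would insert $-c_g+c_g$ after $\lambda_g(k)$. Then $c_g+\lambda_g(k)-c_g=\Inn^N_{c_g}\lambda_g(k)=\vartheta_g(k)$ by \eqref{eq:eta-representatives}. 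The remaining terms give $c_g+\lambda_g(c_\ell)=c_{g\ell}$ by \eqref{eq:representative-cocycle}. The right-hand side therefore equals $h+\vartheta_g(k)+c_{g\ell}$, which is $b(xy)$.

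The only point needing care is keeping the order of summands in the noncommutative group $(N,+)$. In particular, one must use the identity $\Inn_{c_g}\lambda_g=\vartheta_g$ for the specific representatives $c_g$, which is exactly what Lemma~\ref{lem:lambda-hom} provides. It is also worth noting that $\vartheta_g(k)\in H$ by the $\vartheta(L)$-invariance of $H$, so the multiplication \eqref{eq:G-law} is well defined in $G$. Beyond this the computation is routine.
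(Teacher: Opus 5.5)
Your proposal is correct and follows essentially the paper's argument: bijectivity comes from the disjoint coset partition $N=\bigsqcup_{g\in L}H+c_g$, and the cocycle identity is the same order-sensitive bookkeeping in $(N,+)$. The only cosmetic difference is the route to the final expression. You pass through the two identities of Lemma~\ref{lem:lambda-hom}, $c_{g\ell}=c_g+\lambda_g(c_\ell)$ and $\Inn^N_{c_g}\lambda_g=\vartheta_g$, whereas the paper expands $\lambda_g$ directly and rewrites $c_{g\ell}=\vartheta_g(c_\ell)+c_g$ using \eqref{eq:op-cocycle}.
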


\begin{proof}
For fixed $g$, the restriction of $b$ to $H\times\{g\}$ is a bijection onto
$H+c_g$.  The disjoint partition \eqref{eq:coset-partition} therefore
proves that $b$ is bijective.

Let $x=(h,g)$ and $y=(k,\ell)$.  Using \eqref{eq:G-law} and
\eqref{eq:op-cocycle},
\begin{align*}
 b(xy)
 &=h+\vartheta_g(k)+c_{g\ell}\\
 &=h+\vartheta_g(k)+
   \vartheta_g(c_\ell)+c_g.
\end{align*}
On the other hand,
\begin{align*}
 b(x)+\lambda_x(b(y))
 &=(h+c_g)+
   \bigl((-c_g)+
   \vartheta_g(k+c_\ell)+c_g\bigr)\\
 &=h+\vartheta_g(k)+
   \vartheta_g(c_\ell)+c_g.
\end{align*}
The two expressions are identical.
\end{proof}

Transport the group law of $G$ to $N$ through the bijection $b$.  Equivalently,
every element of $N$ has a unique expression $h+c_g$, and we define
\begin{equation}\label{eq:circ-explicit}
 (h+c_g)\circ(k+c_\ell)
 =h+\vartheta_g(k)+c_{g\ell}.
\end{equation}
Formula~\eqref{eq:circ-explicit} is the transport of the
semidirect-product law~\eqref{eq:G-law}.  Hence $(N,\circ)$ is a group and
$(N,\circ)\simeq G$.  Through the same bijection, for the unique expression
$a=h+c_g$ we write
$$
 \lambda_a:=\lambda_{(h,g)}=\lambda_g.
$$
Since $c_1=(0,0,e_0)$ and the identity of $H$ is the zero element of
$(N,+)$, formula~\eqref{eq:circ-explicit} shows that the identity of
$(N,\circ)$ is the same element $0$ as the identity of $(N,+)$.

\begin{theo}\label{thm:brace}
The operations $+$ and $\circ$ make $(N,+,\circ)$ into a skew brace.
For $a=h+c_g$ and every $z\in N$,
\begin{equation}\label{eq:brace-lambda}
 \lambda_a(z)=(-c_g)+\vartheta_g(z)+c_g,
\end{equation}
and
\begin{equation}\label{eq:a-circ-z}
 a\circ z=a+\lambda_a(z).
\end{equation}
\end{theo}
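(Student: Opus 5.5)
Since $(N,\circ)$ was defined by transporting the group law of $G$ along $b$, both operations are already group laws; what remains is the brace identity \eqref{eq:brace-identity}. The plan is to prove the two displayed formulas first and then derive the brace identity from them. The tool is Proposition~\ref{prop:bijective-cocycle}: a bijective $1$-cocycle $b:G\to N$ with respect to a homomorphism $\lambda:G\to\Aut(N)$ makes $N$ a skew brace once the group law of $G$ is transported. I will also give this verification explicitly, in the coordinates of \eqref{eq:circ-explicit}.

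\emph{Step 1: formula \eqref{eq:brace-lambda}.} This follows from the definitions. By \eqref{eq:lambda-full} and the convention $\lambda_a=\lambda_g$ for $a=h+c_g$, together with \eqref{eq:lambda-g}, we get $\lambda_a(z)=(-c_g)+\vartheta_g(z)+c_g$.

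\emph{Step 2: formula \eqref{eq:a-circ-z}.} Take $z\in N$ and write it uniquely as $z=k+c_\ell$ with $k\in H$, using \eqref{eq:coset-partition}. By \eqref{eq:circ-explicit} and then \eqref{eq:op-cocycle},
$$
 a\circ z=h+\vartheta_g(k)+c_{g\ell}=h+\vartheta_g(k)+\vartheta_g(c_\ell)+c_g .
$$
Step~1 gives the same value for the other side:
$$
 a+\lambda_a(z)=h+c_g+(-c_g)+\vartheta_g(k+c_\ell)+c_g=h+\vartheta_g(k)+\vartheta_g(c_\ell)+c_g .
$$
This is exactly the computation in the proof of Proposition~\ref{prop:bijective-cocycle}, rewritten in $N$. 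The one point needing care is that $\vartheta_g$ is additive on $N$; this holds because every automorphism of $N_0$ is also an automorphism of $N_0^{\op}$.

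\emph{Step 3: the brace identity.} Fix $a$ and $b',c\in N$. Since $\lambda_a\in\Aut(N,+)$, which holds because it is a composite of the additive automorphism $\vartheta_g$ and an inner automorphism of $(N,+)$, Step~2 gives
$$
 a\circ(b'+c)=a+\lambda_a(b')+\lambda_a(c).
$$
The right-hand side of \eqref{eq:brace-identity} is
$$
 (a+\lambda_a(b'))-a+(a+\lambda_a(c)),
$$
which equals the same expression, because the inner $-a+a$ cancels in the group $(N,+)$. So the identity holds.

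The only delicate point is bookkeeping of the order of summands in the opposite group $N$ in Step~2, namely that \eqref{eq:op-cocycle} puts $c_g$ on the right. Lemma~\ref{lem:lambda-hom} and Proposition~\ref{prop:bijective-cocycle} already handle this, so no real obstacle is expected.
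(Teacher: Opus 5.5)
Your proposal is correct and follows essentially the paper's proof. The paper likewise obtains \eqref{eq:a-circ-z} as the cocycle identity \eqref{eq:full-cocycle} transported through $b$ (you simply redo that coordinate computation explicitly), reads \eqref{eq:brace-lambda} off \eqref{eq:lambda-g} and the convention $\lambda_a=\lambda_g$, and then derives the brace identity from the additivity of $\lambda_a$ by the same chain of equalities.
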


\begin{proof}
Equation \eqref{eq:a-circ-z} is exactly the cocycle identity
\eqref{eq:full-cocycle}, transported through $b$.  Since every $\lambda_a$
is an automorphism of $(N,+)$, for $a,b,c\in N$ we obtain
\begin{align*}
 a\circ(b+c)
 &=a+\lambda_a(b+c)\\
 &=a+\lambda_a(b)+\lambda_a(c)\\
 &=(a+\lambda_a(b))+(-a)+(a+\lambda_a(c))\\
 &=(a\circ b)-a+(a\circ c).
\end{align*}
This is precisely the skew-brace identity in equation~\eqref{eq:brace-identity}.

\end{proof}

Combining Equations~\eqref{eq:N-order} and \eqref{eq:G-order} with
Theorem~\ref{thm:brace}, the additive group is soluble, whereas the
multiplicative group has the insoluble quotient $\PSL_2(7)$.  More directly,
the map
$$
 G\longrightarrow\Hol(N),\qquad
 x\longmapsto\bigl(b(x),\lambda_x\bigr),
$$
is a homomorphism by Equation~\eqref{eq:full-cocycle}.  It is injective
because $b$ is bijective, and its orbit of $0\in N$ is $b(G)=N$; hence its
image is a regular subgroup of $\Hol(N)$ isomorphic to the insoluble group
$G$.  This proves the Main Theorem and gives a counterexample in the original
holomorph formulation.

\section*{Acknowledgements}

All authors are members of the non-profit association
``AGTA -- Advances in Group Theory and Applications''
(\texttt{www.advgrouptheory.com}) and are supported by GNSAGA (INdAM).

\bigskip\bigskip
\renewcommand{\bibsection}{\begin{flushright}\Large{REFERENCES}\\
\rule{8cm}{0.4pt}\\[0.8cm]\end{flushright}}

\bigskip\bigskip
\begin{flushleft}\rule{8cm}{0.4pt}\\\end{flushleft}

{\sloppy
\noindent Massimiliano Di Matteo

\noindent Department of Mathematics and Physics

\noindent University of Campania ``Luigi Vanvitelli''

\noindent Viale Lincoln 5, 81100 Caserta, Italy

\noindent e-mail: massimiliano.dimatteo@unicampania.it
}\bigskip\bigskip

{\sloppy
\noindent Maria Ferrara

\noindent Department of Engineering, Faculty of Engineering and Computer Science

\noindent Pegaso University, Naples, Italy

\noindent e-mail: maria.ferrara1@unipegaso.it
}\bigskip\bigskip

{\sloppy
\noindent Marco Trombetti

\noindent Department of Mathematics and Applications ``Renato Caccioppoli''

\noindent University of Naples Federico II

\noindent Monte S. Angelo University Complex, Via Cintia, 80126 Naples, Italy

\noindent e-mail: marco.trombetti@unina.it
}


\begin{thebibliography}{99}


\bibitem{BallesterEtAl2023}
A. Ballester-Bolinches, R. Esteban-Romero, P. Jim\'enez-Seral and
V. P\'erez-Calabuig,
\emph{Some group-theoretical approaches to skew left braces},
Int. J. Group Theory \textbf{12} (2023), no.~2, 99--109.

\bibitem{Byott2015}
N.~P. Byott,
\emph{Solubility criteria for Hopf--Galois structures},
New York J. Math. \textbf{21} (2015), 883--903.

\bibitem{Byott2024}
N.~P. Byott,
\emph{On insoluble transitive subgroups in the holomorph of a finite soluble group},
J. Algebra \textbf{638} (2024), 1--31.


\bibitem{CedoSmoktunowiczVendramin2019}
F. Ced\'o, A. Smoktunowicz and L. Vendramin,
\emph{Skew left braces of nilpotent type},
Proc. Lond. Math. Soc. (3) \textbf{118} (2019), 1367--1392.

\bibitem{CedoVendramin2026}
F. Ced\'o and L. Vendramin,
\emph{Groups, Radical Rings, and the Yang--Baxter Equation: A Combinatorial
Approach to Solutions}, Progress in Mathematics, vol.~361, Birkh\"auser,
Cham, 2026.

\bibitem{DameleCyclicSylow2026}
M. Damele,
\emph{Simple skew braces with cyclic Sylow subgroups},
arXiv:2607.17125 (2026).


\bibitem{DameleErcan2026}
M. Damele and G. Ercan,
\emph{Ideals and solvability in skew braces},
arXiv:2607.19955 (2026).

\bibitem{DameleLoi2026}
M. Damele and A. Loi,
\emph{Solvability and rigidity for topological skew braces},
arXiv:2605.07609 (2026).

\bibitem{DameleZGroup2026}
M. Damele,
\emph{Finite skew braces whose additive group is a $Z$-group},
arXiv:2603.22980 (2026).

\bibitem{DameleMastrogiacomo2026}
M. Damele and F. Mastrogiacomo,
\emph{Finite groups in which every proper characteristic subgroup is cyclic},
Mediterr. J. Math. \textbf{23} (2026), Article 73.

\bibitem{DameleTwoSided2026}
M. Damele,
\emph{On the multiplicative group of a two-sided skew brace of solvable type},
arXiv:2603.24637 (2026).

\bibitem{GorshkovNasybullov2021}
I. Gorshkov and T. Nasybullov,
\emph{Finite skew braces with solvable additive group},
J. Algebra \textbf{574} (2021), 172--183.

\bibitem{GreitherPareigis1987}
C. Greither and B. Pareigis,
\emph{Hopf Galois theory for separable field extensions},
J. Algebra \textbf{106} (1987), 239--258.

\bibitem{GuarnieriVendramin2017}
L. Guarnieri and L. Vendramin,
\emph{Skew braces and the Yang--Baxter equation},
Math. Comp. \textbf{86} (2017), 2519--2534.

\bibitem{LiNasybullovZadvornov2025}
B. Li, T. Nasybullov and V. Zadvornov,
\emph{Comparison of addition and multiplication in a skew brace},
arXiv:2511.22322 (2025).

\bibitem{Nasybullov2019}
T. Nasybullov,
\emph{Connections between properties of the additive and the multiplicative
groups of a two-sided skew brace},
J. Algebra \textbf{540} (2019), 156--167.

\bibitem{SmoktunowiczVendramin2018}
A. Smoktunowicz and L. Vendramin,
\emph{On skew braces (with an appendix by N. Byott and L. Vendramin)},
J. Comb. Algebra \textbf{2} (2018), 47--86.

\bibitem{StefanelloTrappeniers2023}
L. Stefanello and S. Trappeniers,
\emph{On the connection between Hopf--Galois structures and skew braces},
Bull. Lond. Math. Soc. \textbf{55} (2023), 1726--1748.

\bibitem{StefanelloTrappeniers2023b}
L. Stefanello and S. Trappeniers,
\emph{On bi-skew braces and brace blocks},
J. Pure Appl. Algebra \textbf{227} (2023), Paper No.~107295.


\bibitem{Trappeniers2023}
S. Trappeniers,
\emph{On two-sided skew braces},
J. Algebra \textbf{631} (2023), 267--286.

\bibitem{TsangQin2020}
C. Tsang and C. Qin,
\emph{On the solvability of regular subgroups in the holomorph of a finite
solvable group},
Int. J. Algebra Comput. \textbf{30} (2020), 253--265.

\bibitem{Vendramin2019}
L. Vendramin,
\emph{Problems on skew left braces},
Adv. Group Theory Appl. \textbf{7} (2019), 15--37.


\bibitem{GAP4}
{\ssc The GAP Group}: ``GAP -- Groups, Algorithms, and Programming'',
Version 4.13.1 (2024), \url{https://www.gap-system.org}.

\bibitem{YangBaxter}
{\ssc L. Vendramin -- O. Konovalov}: ``YangBaxter, Combinatorial Solutions
for the Yang--Baxter Equation'', Version 0.10.7 (2025), GAP package,
\url{https://gap-packages.github.io/YangBaxter}.

\end{thebibliography}
\end{document}